\documentclass[a4paper,oneside,english]{amsart}
\usepackage[T1]{fontenc}
\usepackage[utf8]{inputenc}
\usepackage{babel}
\usepackage{amstext}
\usepackage{amsthm}
\usepackage{amsmath}
\usepackage{xcolor}

\usepackage{amssymb}
\usepackage{setspace}
\usepackage{mathrsfs}
\usepackage[hidelinks,colorlinks=true,linkcolor=blue,citecolor=blue]{hyperref}
\usepackage[colorinlistoftodos,prependcaption,textsize=tiny]{todonotes}
\usepackage{lipsum}
\usepackage{xargs}
\usepackage{graphicx}
\renewcommand\eqref[1]{(\ref{#1})}
\DeclareUnicodeCharacter{FB00}{ff}
\graphicspath{{figures/}}

\numberwithin{equation}{section}
\theoremstyle{plain}
\newtheorem{theorem}{Theorem}[section]
\newtheorem{proposition}[theorem]{Proposition}
\newtheorem{corollary}[theorem]{Corollary}
\newtheorem{lemma}[theorem]{Lemma}

\theoremstyle{definition}
\newtheorem{definition}[theorem]{Definition}
\newtheorem{remark}[theorem]{Remark}

\newcommand{\Hh}{\mathbb{H}^1}
\newcommand{\CC}{\mathrm{CC}}

\newcommand{\R}{\mathbb{R}}

\title[CC--geodesic Kakeya sets]{A short note on the dimension of the CC--geodesic Kakeya sets in the first Heisenberg group}
\author[M. Chatzakou]{Marianna Chatzakou}
\address{
  Marianna Chatzakou:
  \endgraf
  Department of Mathematics: Analysis, Logic and Discrete Mathematics
  \endgraf
  Ghent University, Ghent, Belgium
  \endgraf
  {\it E-mail address} {\rm marianna.chatzakou@ugent.be}
}

\keywords{Kakeya sets, CC--geodesics, Heisenberg group, Hausdorff
dimension}

\dedicatory{In memory of Yannis Sarantopoulos}

\begin{document}

\begin{abstract}
We study CC--geodesic Kakeya sets in the first Heisenberg group,
namely Borel sets \(E\) such that, for every unit-speed 
CC--geodesic segment of length \(1\) issuing from the identity, some
left translate of the segment is contained in \(E\). The natural
analogue of the Kakeya conjecture would predict full Heisenberg
Hausdorff dimension \(4\) for such sets. We show that this prediction
fails: the sharp lower bound for their Heisenberg Hausdorff dimension
is \(3\), and it remains sharp even among compact CC--geodesic Kakeya
sets. By adjoining a Lebesgue-null set of full Heisenberg Hausdorff
dimension, we  obtain a CC--geodesic Kakeya set of full Heisenberg
Hausdorff dimension \(4\) and zero Lebesgue measure. Finally, when one prescribes only geodesic segments with one fixed
nonzero curvature parameter \(\kappa\), rather than segments of all
curvatures, the condition is weaker. For every
\(\kappa\in(0,2\pi]\), we construct a compact
curvature-\(\kappa\) Kakeya set of zero Lebesgue measure whose
Euclidean and Heisenberg Hausdorff dimensions are both equal to \(1\).
\end{abstract}

\maketitle

\section{Introduction}\label{sec:introduction}

The classical Kakeya problem asks how small a set can be if it contains a
unit line segment in every direction.  More precisely, a compact  set
\(E\subset\mathbb R^n\) is a Kakeya set if, for every
\(e\in S^{n-1}\), there exists \(a\in\mathbb R^n\) such that
\[
a+[0,1]e\subset E.
\]
Besicovitch showed that such sets may have Lebesgue measure zero already in
the plane \cite{Bes28}.  The Kakeya conjecture asserts nevertheless that every
Kakeya set in \(\mathbb R^n\) has Hausdorff dimension \(n\).  This was proved
in the plane by Davies \cite{Dav71}; after a long sequence of improvements in
three dimensions, including \cite{KLT00,KZ19}, Hong Wang and Joshua Zahl
recently proved that every Kakeya set in \(\mathbb R^3\) has both Hausdorff
and Minkowski dimension \(3\) \cite{WZ25}. The literature on the Kakeya problem and its connections with harmonic
analysis is vast, and we do not attempt to provide a comprehensive
bibliography here.  For broader surveys of the problem and its
development, we refer the reader to \cite{Wol99Survey,KT02Survey,Zah26Survey}.

The main motivation of the present note is to formulate and study the
corresponding question in the first Heisenberg group \(\Hh\), equipped with
its Carnot--Carath\'eodory geometry. Since the Euclidean definition requires a set to contain a translate of every unit segment issuing from the origin, in \(\Hh\) we accordingly require a set to contain a left translate of every unit-speed  CC--geodesic segment issuing from the identity; see Definition~\ref{def:cc-kakeya} below.  The existence of a Lebesgue-null set satisfying this condition was
posed as an open question by A.~Lukyanenko
\cite{LukyanenkoQuestions}.  More precisely, Definition~\ref{def:cc-kakeya} is the
sub-Riemannian analogue of the classical Euclidean definition of a Kakeya set, where we make the following two natural replacements:
\begin{itemize}
\item Euclidean unit segments are replaced by unit-speed 
CC--geodesic segments;
\item Euclidean translations are replaced by Heisenberg left translations,
which are isometries for the Carnot--Carath\'eodory distance.
\end{itemize}

Since the homogeneous dimension of \(\Hh\) is \(Q=4\), the direct analogue
of the Euclidean Kakeya conjecture would predict that every CC--geodesic
Kakeya set has full Heisenberg Hausdorff dimension \(4\).  A fact that at
first glance supports this conjecture is that, if one uses only horizontal
unit segments issuing from the identity, rather than all unit-length
CC--geodesic segments issuing from the identity, then the sharp lower bound
for the Heisenberg Hausdorff dimension is \(3\).  Liu proved this sharp lower
bound and showed that it is attained by the horizontal plane \cite{L22}.
F\"assler, Pinamonti, and Wald later obtained the same sharp lower bound from
a Heisenberg Kakeya maximal inequality \cite{FPW}. See also the recent bilinear Kakeya inequality of Galanos
\cite{Gal26}. These horizontal
segments are precisely the zero-curvature members of our family.  Hence
either result immediately gives the lower bound \(3\) for the stronger class
considered here.  Its sharpness, however, does not follow from the
horizontal-plane example, because the horizontal plane does not contain the
required nonzero-curvature CC--geodesic segments, and one might be tempted
to believe that the lower bound in this case is \(4\), that is, that the
CC--geodesic Kakeya conjecture in \(\Hh\) is true.

Our first result
shows that this prediction is, however, false; the optimal lower bound is instead
\(3\). Precisely, we prove the following:

\begin{theorem}\label{thm:main}
Every CC--geodesic Kakeya set in \(\Hh\) has Heisenberg Hausdorff
dimension at least \(3\), and this lower bound is sharp.
\end{theorem}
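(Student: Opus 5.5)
The lower bound needs no new work. Every unit-speed CC--geodesic segment of length \(1\) issuing from the identity with zero curvature parameter is a horizontal unit segment \(s\mapsto(s\cos\theta,s\sin\theta,0)\), \(s\in[0,1]\). Hence a CC--geodesic Kakeya set contains a left translate of every horizontal unit segment through the identity. By the sharp lower bound of Liu \cite{L22}, or equivalently that of F\"assler--Pinamonti--Wald \cite{FPW}, its Heisenberg Hausdorff dimension is therefore at least \(3\). All the work is in the sharpness. There I would exhibit one CC--geodesic Kakeya set \(E\) with \(\dim_{\Hh}E\le 3\). I would build it as
\[
E=\{(x,y,0)\}\cup\bigcup_{0<|\kappa|\le 2\pi}E_\kappa ,
\]
where \(E_\kappa\) contains a left translate of every unit-length geodesic from the identity with curvature parameter \(\kappa\). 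The horizontal plane handles \(\kappa=0\) and has dimension \(3\).

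First I would make the geodesics explicit. For \(\kappa\neq0\), the horizontal projection of \(\gamma_{\kappa,\theta}\) is an arc of length \(1\) of a circle of radius \(1/|\kappa|\) through the origin. Its initial direction is \(\theta\), and its \(t\)-coordinate is the balayage (signed) area. Rotations about the \(t\)-axis are isometric automorphisms, so \(\gamma_{\kappa,\theta}=R_\theta\gamma_{\kappa,0}\). Left translation by \((a,b,c)\) translates the projection by \((a,b)\) and shifts \(t\) by \(c\) plus a term linear in the point. A first naive choice of \(E_\kappa\) would be the vertical cylinder over a circle of radius \(1/|\kappa|\), since it contains a left translate of each \(\gamma_{\kappa,\theta}\). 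Its dimension is \(3\), but any union of such cylinders over all \(\kappa\) has dimension \(\dim_E(\text{base})+2=4\), because a planar set containing circles of a continuum of radii has dimension \(2\). So \(E_\kappa\) must be much thinner than a cylinder.

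The plan is therefore to take \(E_\kappa\) to be compact sets of dimension \(1\), as in the curvature-\(\kappa\) construction announced in the abstract. I would need these sets with two uniformity properties. First, each \(E_\kappa\) should be covered by \(\lesssim_\varepsilon r^{-1-\varepsilon}\) Heisenberg balls of radius \(r\), uniformly in \(\kappa\) on compact subintervals away from \(0\). Second, the dependence on \(\kappa\) should be Lipschitz in the Euclidean Hausdorff distance, namely \(d_{\mathrm{Haus},E}(E_\kappa,E_{\kappa'})\lesssim|\kappa-\kappa'|\). Such a family would come from choosing the translations and construction parameters smoothly in \(\kappa\).

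Since the Euclidean and CC metrics satisfy \(d_{\CC}\lesssim d_E^{1/2}\) on compacts, the second property gives \(d_{\mathrm{Haus},\CC}(E_\kappa,E_{\kappa'})\lesssim|\kappa-\kappa'|^{1/2}\). I would then take an \(r^2\)-net of \(\kappa\)'s in \([\delta,2\pi]\). At each net point I would cover \(E_\kappa\) by \(r^{-1-\varepsilon}\) balls and enlarge them by a fixed factor. This covers \(\bigcup_{\delta\le|\kappa|\le2\pi}E_\kappa\) by \(\lesssim r^{-3-\varepsilon}\) balls of radius \(\sim r\), so this part has dimension at most \(3\). Finally I would let \(\delta\to0\) through a countable sequence, using countable stability of Hausdorff dimension. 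Near \(\kappa=0\) the circles degenerate to lines, and no uniformity at the endpoint is needed.

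I expect the main obstacle to be the second step. It requires upgrading the single-\(\kappa\) construction to a family that is uniformly of box dimension \(1\) and depends Lipschitz-continuously on \(\kappa\). A Besicovitch-type limit construction is typically not continuous in its parameters. I would first try to obtain \(E_\kappa\) as the image of one fixed \(\kappa\)-independent model set under a smooth \(\kappa\)-dependent map that sends curvature-\(1\) geodesics to curvature-\(\kappa\) geodesics. The natural candidate is the dilation \(\delta_{1/\kappa}\) composed with a smooth reparametrization. If the length normalization breaks this, I would instead redo the iterative construction with all choices depending smoothly on \(\kappa\), and verify the uniform covering bound at each stage.
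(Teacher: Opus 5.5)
\textbf{There is a genuine gap: the sharpness half is not proved.} Your lower bound is fine. The architecture of your sharpness argument is also sound: the plane $\Pi$ for $\kappa=0$, one-dimensional sets $E_\kappa$ for $\kappa\neq0$, uniform covering numbers, an $r^2$-net in $\kappa$ combined with $d_{\CC}\lesssim d_E^{1/2}$, and countable stability as $\delta\to0$.

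The problem is that everything rests on a family of curvature-$\kappa$ Kakeya sets $E_\kappa$ with uniform $r^{-1-\varepsilon}$ covering bounds and Lipschitz dependence on $\kappa$. You never construct this family; you call it the main obstacle and offer only tentative routes.

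The missing idea is that for fixed $\kappa\neq0$ there is no Besicovitch phenomenon to fight. Along a nonzero-curvature geodesic the horizontal direction rotates through every angle, and the explicit formulas for $\widetilde\gamma_{\theta,k}$ give the sliding identity
\[
\widetilde\gamma_{0,\kappa}(a+s)=\widetilde\gamma_{0,\kappa}(a)\cdot\widetilde\gamma_{\kappa a,\kappa}(s),\qquad a,s\in\mathbb R .
\]
Taking $a=\theta/\kappa$ shows that the single horizontal curve $\widetilde\gamma_{0,\kappa}([0,2\pi/\kappa+1])$ contains a left translate of every $\gamma_{\theta,\kappa}$. This is the lift to $\Hh$ of the planar fact that a circle contains a rotated copy of each of its arcs. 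Your dilation idea also works with this model curve: the dilation $(x,y,t)\mapsto(x/\kappa,y/\kappa,t/\kappa^2)$ maps $\widetilde\gamma_{0,1}([0,4\pi])$ onto $\widetilde\gamma_{0,\kappa}([0,4\pi/\kappa])$, which is long enough for every $\kappa\le2\pi$.

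\textbf{How the gap closes, and how this compares with the paper.} With this choice both of your uniformity properties are immediate. The curve is $1$-Lipschitz into $(\Hh,d_{\CC})$ with length at most $2\pi/\delta+1$. The map $(\kappa,\sigma)\mapsto\widetilde\gamma_{0,\kappa}(\sigma(2\pi/\kappa+1))$ is smooth on $[\delta,2\pi]\times[0,1]$. So your net argument closes, and the union is $\sigma$-compact, hence Borel.

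In fact the covering machinery becomes unnecessary. The union over $\kappa\in[\delta,2\pi]$ is a Lipschitz image of a rectangle, so it has Euclidean dimension at most $2$. The Euclidean--Heisenberg dimension comparison then gives Heisenberg dimension at most $3$.

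The paper goes one step further. It translates each helix so that its projected circle is centred at the origin. The helices for all radii then fit together into the single smooth embedded surface $\Sigma=\{(r\cos\varphi,r\sin\varphi,r^2\varphi/2):r>0,\ \varphi\in\mathbb R\}$. Concretely, $p(\theta,k)\cdot\gamma_{\theta,k}(s)=\Phi(1/|k|,\theta+ks)$ for an explicit $p(\theta,k)$, with the angle shifted by $\pi$ when $k<0$. Then $E=\Sigma\cup\Pi$ is a union of two $C^1$ surfaces and has Heisenberg dimension $3$, with no nets, covering numbers, or limit in $\delta$ needed.
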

In particular, the additional nonzero-curvature geodesics do not
raise the sharp lower bound above the one already known for horizontal line
segments. For the proof of Theorem \ref{thm:main}  we construct a different set, formed by two smooth
Euclidean surfaces, which contains all the required geodesic segments and has
Heisenberg Hausdorff dimension \(3\).

A related but distinct Euclidean problem was studied by
F\"assler--Orponen \cite{FO23} and, independently, by Katz--Wu--Zahl
\cite{KWZ23}. They consider Kakeya sets associated with the \(SL_2\)
family of lines in \(\mathbb R^3\), and prove that such sets have
Euclidean Hausdorff dimension \(3\). One part of this family consists
of the lines
\[
(a,b,0)+\operatorname{span}(c,d,1),
\qquad ad-bc=1.
\]
Under the change of coordinates
\[
\Xi(x,y,t):=(x,y,t/2),
\]
such a line becomes a horizontal Heisenberg line:
\[
\Xi\bigl((a,b,0)+s(c,d,1)\bigr)
=
(a,b,0)\cdot(sc,sd,0).
\]
Thus the connection with horizontal Heisenberg lines is explicit,
although both the prescribed directions and the Hausdorff dimensions
in these results are Euclidean.

We next consider whether the sharp lower bound can be attained by a
compact CC--geodesic Kakeya set; although Liu's sharpness example
\(\Pi\) is unbounded, the closed disk \(D_2\subset\Pi\) is a compact
Kakeya set for the horizontal definition and has Heisenberg Hausdorff
dimension \(3\); see Remark~\ref{rem:D_2}. For the full
CC--geodesic definition, constructing a compact sharpness example is
more involved. Nevertheless, we prove the following.

\begin{theorem}
    \label{thm:main4}
  Every compact CC--geodesic Kakeya set in \(\Hh\) has Heisenberg Hausdorff
dimension at least \(3\), and this lower bound is sharp.  
\end{theorem}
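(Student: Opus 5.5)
The lower bound needs nothing new. Every compact CC--geodesic Kakeya set is in particular a CC--geodesic Kakeya set, so Theorem~\ref{thm:main} gives Heisenberg Hausdorff dimension at least \(3\). (Alternatively, one can restrict to the zero-curvature horizontal segments and invoke Liu \cite{L22} or F\"assler--Pinamonti--Wald \cite{FPW}.) All the work is in sharpness. I would aim to build a compact set of the form
\[
K=\{p_\ast\}\cup\bigcup_{j\ge 0} S_j ,
\]
where each \(S_j\) is a compact piece of a finite union of \(C^1\) Euclidean surfaces and \(S_j\to p_\ast\) in the Hausdorff sense. The dimension count then uses two standard facts. A compact \(C^1\) surface has Heisenberg Hausdorff dimension at most \(3\), since the CC distance satisfies \(d_{\CC}\lesssim |\cdot|^{1/2}\) locally and a surface is covered by \(\sim r^{-2}\) Euclidean \(r\)-balls, each of CC diameter \(\lesssim r\cdot r^{-1/2}\); more simply, one can use the known estimate \(\dim_{\CC}\le 2\dim_E-\) for surfaces and check the box count. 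Hausdorff dimension is also countably stable. Together these give \(\dim K\le 3\).

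For the construction I would parametrize the unit-speed CC--geodesics from the identity by an initial horizontal direction \(\theta\in S^1\) and a curvature \(\kappa\in\mathbb R\), writing \(\gamma_{\theta,\kappa}:[0,1]\to\Hh\). I would take from the proof of Theorem~\ref{thm:main} the choice of left translations \(p(\theta,\kappa)\) for which the curves \(p(\theta,\kappa)\cdot\gamma_{\theta,\kappa}([0,1])\) sweep out two smooth surfaces. Restricted to a bounded curvature range \(|\kappa|\le 2\), the map \((\theta,\kappa,s)\mapsto p(\theta,\kappa)\cdot\gamma_{\theta,\kappa}(s)\) has image in a compact piece of those surfaces, and that piece is \(S_0\). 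This step is fine as long as the translations \(p(\theta,\kappa)\) are bounded on bounded \(\kappa\)-ranges, which I would verify directly from the explicit formula.

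The obstacle is the unbounded curvature range: a compact set must contain translates of \(\gamma_{\theta,\kappa}\) for all \(|\kappa|\) large. To handle it I would use the Heisenberg dilations \(\delta_r\) together with reparametrization. The geodesic \(\gamma_{\theta,\kappa}\) of length \(1\) has horizontal projection winding about \(|\kappa|/2\pi\) times around a circle of radius \(1/|\kappa|\), so it lies in a CC ball of radius \(\lesssim 1/|\kappa|\) about its starting point. The strategy is therefore to cover the dyadic block \(2^j\le|\kappa|\le 2^{j+1}\) by a compact surface piece \(S_j\) of diameter \(\lesssim 2^{-j}\), placed inside a ball of radius \(\lesssim 2^{-j}\) around a fixed point \(p_\ast\). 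The natural way to get \(S_j\) is to apply the bounded-curvature construction to the dilated family \(\delta_{2^j}\gamma_{\theta,\kappa}\). This family consists of geodesic segments of curvature \(\kappa 2^{-j}\in[1,2]\) and length \(2^j\), which are concatenations of boundedly many translated unit geodesics. Dilating back by \(\delta_{2^{-j}}\) and left-translating to \(p_\ast\) then gives \(S_j\).

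The delicate point is that \(S_j\) must contain full translated segments and not merely pieces. So I would check that the bounded-curvature surfaces can be chosen to contain segments of any fixed length \(L\), which amounts to enlarging \(S_0\) to length \(L\) uniformly in the construction, with \(\kappa L\) bounded. Compactness of \(K\) then follows because \(\operatorname{diam} S_j\to 0\) and \(S_j\to p_\ast\). Finally, \(K\) is Kakeya since every \((\theta,\kappa)\) falls in some block, and \(\dim K=\max(3,\sup_j\dim S_j)=3\).
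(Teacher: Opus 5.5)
Your lower bound is fine. The sharpness construction has a genuine gap, and it is at the step you call fine.

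\textbf{The obstacle is at small curvature, not large.} Definition~\ref{def:cc-kakeya} only asks for curvatures $k\in[-2\pi,2\pi]$: for $|k|>2\pi$ a unit segment runs past the cut time $2\pi/|k|$ and is not minimising. So there is no large-curvature problem, and your dilation and dyadic-block machinery addresses something that is not required. The real obstruction to compactness is $k\to0$.

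The translations of Lemma~\ref{lem:pgamma} are $p(\theta,k)=(\cos\theta/k,\sin\theta/k,\theta/(2k^2))$ for $k>0$, and similarly for $k<0$. The translated segment is $\Phi(1/|k|,\theta+ks)$. It lies on the cylinder of radius $1/|k|$, at height of order $k^{-2}$. Hence the image of $\{|\kappa|\le 2\}$ is unbounded, and the check you defer to ``the explicit formula'' fails.

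Choosing other translations inside $\Sigma$ does not help. On $\Sigma$ one has $\dot t=r\dot r\varphi+\tfrac12 r^2\dot\varphi$, so by \eqref{eq:horiz} a horizontal curve in $\Sigma$ satisfies $r\dot r\varphi=0$. It follows that a horizontal arc of curvature $k\neq0$ contained in $\Sigma$ lies on the cylinder $r=1/|k|$. A bounded piece of $\Sigma$ therefore contains no segments of small nonzero curvature. Your picture of pieces $S_j$ shrinking to a point $p_\ast$ also cannot work in this regime, since nearly horizontal unit segments have CC diameter close to $1$.

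\textbf{The missing idea: chop the long fixed-curvature orbit.} For fixed $k>0$, \eqref{eq:sliding} shows that the single orbit $\widetilde\gamma_{0,k}$ contains translates of all $\gamma_{\theta,k}$, but it needs length $2\pi/k$, which blows up as $k\to0$. The paper cuts this orbit into pieces of length at most $2$ and translates each piece back to $e$.

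\emph{Indexing the pieces.} Write $\theta/k=j+u$ with $j=\lfloor\theta/k\rfloor$ and $0\le u<1$. Then $k\le 2\pi/j$ when $j\ge1$. The required segment is a left translate of a subarc of $\widetilde\gamma_{jk,k}([0,\ell_j(k)])$, where $\ell_j(k)=\min\{2,2\pi/k+1-j\}$.

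\emph{Each piece has dimension at most $3$.} For each $j$, the set $E_j=\bigcup_{\kappa}\widetilde\gamma_{j\kappa,\kappa}([0,\ell_j(\kappa)])$, taken over $0\le\kappa\le 2\pi/j$ (over $[0,2\pi]$ when $j=0$), is a Lipschitz image of a compact planar set. So its Euclidean dimension is at most $2$ and its Heisenberg dimension at most $3$.

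\emph{The pieces accumulate on a disk.} Only curvatures $\kappa\le 2\pi/j$ enter $E_j$. Hence its points satisfy $x^2+y^2\le4$ and $0\le t\le\kappa s^3/12\le 4\pi/(3j)$. The $E_j$ therefore accumulate on the disk $D_2$, which also supplies the $k=0$ segments. This is what makes $D_2\cup\bigcup_j E_j$ compact with dimension $3$.

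\textbf{A secondary error in the surface dimension count.} Covering a surface by $\sim r^{-2}$ Euclidean $r$-balls, each of CC diameter $\lesssim r^{1/2}$, only gives the bound $4$. The estimate $2\dim^{\mathrm{Euc}}_{\mathcal H}$ also gives $4$. What you need is $\dim_{\mathcal H}^{\Hh}\le\dim_{\mathcal H}^{\mathrm{Euc}}+1$ from \cite{BDCKMT}. It comes from covering each Euclidean $r$-cube by $\sim r^{-1}$ CC balls of radius $r$, for a total of $\sim r^{-3}$ balls.
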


On the other hand, although \(3\) is the smallest
possible Heisenberg Hausdorff dimension, there exists a CC--geodesic
Kakeya set of full Heisenberg Hausdorff dimension \(4\) but Lebesgue measure zero. This can be done simply by adjoining a Lebesgue-null set of Heisenberg Hausdorff
dimension $4$. In particular, we show the following: 

\begin{corollary}\label{thm:main2}

There exists a CC--geodesic  Kakeya set in \(\Hh\) with full
Heisenberg Hausdorff dimension \(4\) but Lebesgue measure $0$.
\end{corollary}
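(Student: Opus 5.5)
Take a Kakeya set of small dimension, whose existence is granted by Theorem~\ref{thm:main} or Theorem~\ref{thm:main4}: a CC--geodesic Kakeya set \(E_0\subset\Hh\) of Heisenberg Hausdorff dimension \(3\). Because \(E_0\) has Heisenberg Hausdorff dimension \(3<4\), and the Heisenberg \(\mathcal H^4\) measure is a constant multiple of Lebesgue measure on \(\R^3\) (the Haar measure of \(\Hh\)), we get \(\mathcal H^4_{\CC}(E_0)=0\). Hence \(E_0\) is Lebesgue-null. Alternatively, if the construction is the union of two smooth Euclidean surfaces, this is immediate, since such surfaces have Euclidean dimension \(2\).

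Next, produce a Borel (in fact compact) Lebesgue-null set \(F\subset\Hh\) with \(\dim_{\CC}F=4\). I would use the comparison between Euclidean and Heisenberg dimensions (Balogh--Rickly--Serra Cassano). For a set of Euclidean dimension \(\alpha\in[2,3]\), the Heisenberg dimension is at least \(\alpha+1\)... More robust is a direct construction: \(F=\{(x,y,t): t\in C\}\cap\) a compact box, where \(C\subset\R\) is a compact Cantor set of Lebesgue measure zero and Euclidean Hausdorff dimension \(1\).

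By Fubini, \(F\) is Lebesgue-null. For its dimension, note that the vertical direction has Heisenberg scaling exponent \(2\): on the center, \(d_{\CC}((0,0,t),(0,0,s))\simeq |t-s|^{1/2}\). So the vertical line \(\{(0,0,t)\}\) with the CC metric is bi-Lipschitz to \(\R\) with the snowflake metric \(|t-s|^{1/2}\). Then \(F\) contains a product-like structure: the horizontal factor has dimension \(2\) and the vertical factor \(C\) has Heisenberg dimension \(2\dim_E C=2\). To turn this into the lower bound \(\dim_{\CC}F\ge 4\), I would apply a Frostman/mass distribution argument. Take \(\mu=\mathcal L^2|_{[0,1]^2}\times\nu\), where \(\nu\) is a Frostman measure on \(C\) with \(\nu(I)\lesssim |I|^{s}\) for \(s<1\). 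A CC ball of radius \(r\) is comparable to a Euclidean box of dimensions \(r\times r\times r^2\), so \(\mu(B_{\CC}(p,r))\lesssim r^2\cdot r^{2s}\). This gives \(\dim_{\CC}F\ge 2+2s\), and letting \(s\to1\) yields \(\ge 4\). The upper bound is automatic, since \(\dim_{\CC}\Hh=4\).

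Finally set \(E=E_0\cup F\). This set is Borel (compact if \(E_0\) is). It contains every required left-translated geodesic segment because \(E_0\) does, so it is a CC--geodesic Kakeya set. It is Lebesgue-null as the union of two null sets. Its dimension is \(\max(3,4)=4\).

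The main obstacle is the ball estimate for \(\mu\). A CC ball \(B_{\CC}(p,r)\) is not centered at the origin, and the box-ball comparison must hold uniformly after left translation. Left translation by \(p\) is an affine shear \((x,y,t)\mapsto (x+p_1,y+p_2,t+p_3+\tfrac12(p_1y-p_2x))\), with the sign depending on the paper's group law. This shear tilts the box, so the \(t\)-coordinate is not constant over the \(r\times r\) horizontal footprint.

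I would handle this by slicing. For each fixed \((x,y)\), the vertical slice of \(B_{\CC}(p,r)\) is an interval of length \(\lesssim r^2\), so its \(\nu\)-mass is \(\lesssim r^{2s}\). Integrating over the horizontal footprint, which has area \(\lesssim r^2\), gives \(\mu(B)\lesssim r^{2+2s}\) by Fubini, regardless of the tilt.
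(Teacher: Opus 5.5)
Your proposal is correct. Its architecture matches the paper's: adjoin, to a Lebesgue-null CC--geodesic Kakeya set of Heisenberg dimension $3$, a Lebesgue-null set of Heisenberg dimension $4$. The real difference is in how you certify the auxiliary set.

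The paper takes $N=K\times[0,1]^2$, with the Cantor factor $K=\bigcup_{m\ge2}C_m$ in the $x$-coordinate. It gets $\dim_{\mathcal H}^{\mathrm{Euc}}(N)=3$ from the product formula and then quotes the dimension-comparison principle $\dim_{\mathcal H}^{\Hh}\ge2\dim_{\mathcal H}^{\mathrm{Euc}}-2$ as a black box.

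You instead place a null Cantor factor in the vertical direction and prove the lower bound directly, via Frostman's lemma and the mass distribution principle in $(\Hh,d_{\CC})$. Your slicing step is sound. Left translation by $p$ only shifts the $t$-coordinate by an amount depending on $(x,y)$, so every vertical slice of $B_{\CC}(p,r)$ lies in an interval of length $\lesssim r^2$, over a horizontal footprint of area $\lesssim r^2$.

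What each route buys:
\begin{itemize}
\item Your route is self-contained and shows the mechanism. The vertical direction has weight $2$, so the Cantor factor contributes $2s$. With the factor in $x$, as in the paper, the same computation gives $\mu(B_{\CC}(p,r))\lesssim r^{3+s}$, which also suffices.
\item Your $F$ is compact, so taking $E_0=E_{\mathrm c}$ from Theorem~\ref{thm:main4} even gives a compact example. The paper does not claim this.
\item The paper's route is shorter and uses only explicit Cantor sets.
\end{itemize}

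Two small points.
\begin{itemize}
\item The existence of a compact Lebesgue-null $C\subset\R$ of dimension $1$ deserves a line. For instance, at stage $n$ remove a middle proportion $\varepsilon_n\to0$ with $\sum\varepsilon_n=\infty$. Alternatively, run your argument on each $C_m$ of the paper with $s<\dim_{\mathcal H}^{\mathrm{Euc}}(C_m)$.
\item The aside you discarded misstates the comparison theorem. For Euclidean dimension $\alpha\in[2,3]$ it gives $2\alpha-2\le\dim_{\mathcal H}^{\Hh}\le\alpha+1$, so $\alpha+1$ is the upper bound. The two bounds coincide only at $\alpha=3$, which is exactly the case the paper uses.
\end{itemize}
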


Second, we study what happens when we 
fix one nonzero curvature \(\kappa\), and we see that in this case  the problem changes substantially;
a single extended CC--geodesic trajectory contains, after suitable left
translations, all the required unit segments.  Following Katz--Wu--Zahl
\cite[\S~1.2.1]{KWZ23}, we use the term \emph{near miss at dimension
\(d\) for the Kakeya conjecture} for an object which shares certain
properties with Kakeya sets but has dimension \(d<3\).  Two standard
near misses at dimension \(5/2\) are the Heisenberg example of
Katz--{\L}aba--Tao \cite{KLT00} and the \(SL_2\) example of
Katz--Zahl \cite{KZ19}.  The former is a subset of
\(\mathbb C^3\) and should not be confused with the first real
Heisenberg group \(\Hh\simeq\mathbb R^3\) considered in the present
work. 
In our setting, for \(\kappa\in(0,2\pi]\), we define curvature-\(\kappa\) Kakeya sets by requiring all prescribed CC--geodesic segments to have curvature \(\kappa\), and we show that such a set can have Hausdorff dimension as small as \(1\). In particular, 
\begin{theorem}\label{thm:main3}
For every \(\kappa\in(0,2\pi]\), there exists a curvature-\(\kappa\) Kakeya set in \(\Hh\) which is compact and has Euclidean and Heisenberg Hausdorff dimension \(1\). In other words, for every \(\kappa\in(0,2\pi]\) there exists a near miss at dimension \(1\) for the CC--geodesic Kakeya conjecture in \(\Hh\).
 \end{theorem}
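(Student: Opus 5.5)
The plan is to take a single extended CC--geodesic trajectory of curvature \(\kappa\) and check that it already contains a left translate of every prescribed unit segment. Fix \(\kappa\in(0,2\pi]\). For \(\theta\in[0,2\pi)\), let \(\gamma_\theta\colon\mathbb R\to\Hh\) be the unit-speed curve issuing from the identity with initial horizontal direction \(e^{i\theta}\) and curvature \(\kappa\). Its horizontal projection is the circle of curvature \(\kappa\) through the origin with initial tangent \(e^{i\theta}\), traversed in the orientation fixed by the sign convention. Its \(t\)-coordinate is the appropriate multiple of the signed area swept out. The restriction \(\gamma_\theta|_{[0,1]}\) is exactly the prescribed curvature-\(\kappa\) unit segment; the hypothesis \(\kappa\le 2\pi\) guarantees that length \(1\) does not exceed one full period \(2\pi/\kappa\), so these restrictions are genuine CC--geodesic segments. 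The candidate set is
\[
E:=\gamma_0\bigl([0,\,2\pi/\kappa+1]\bigr).
\]

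The key step is a self-similarity identity along the trajectory:
\[
\gamma_0(s_0)^{-1}\cdot\gamma_0(s_0+s)=\gamma_{\kappa s_0}(s)\qquad\text{for all } s_0,s\in\mathbb R .
\]
I would prove this in two parts.

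\emph{Horizontality and curvature.} Left translations are isometries and preserve the horizontal distribution. Hence the left-hand side, viewed as a function of \(s\), is a horizontal unit-speed curve starting at the identity. Its horizontal projection is the Euclidean translate of the projected arc, so it is still a circle of curvature \(\kappa\). Its initial tangent is the tangent of the projected circle at time \(s_0\), namely \(e^{i\kappa s_0}\).

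\emph{Matching the vertical coordinate.} A horizontal curve is determined by its projection and its starting point, because the \(t\)-coordinate is recovered by integrating the contact form. Therefore the curve coincides with \(\gamma_{\kappa s_0}\). If preferred, one can instead check the identity directly from the explicit formulas, using the group law and the rotation invariance of \(\Hh\) about the \(t\)-axis, which gives \(\gamma_\theta=R_\theta\gamma_0\).

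Consequently, for each \(\theta\in[0,2\pi)\) we set \(s_0=\theta/\kappa\in[0,2\pi/\kappa)\). Then
\[
\gamma_0(s_0)\cdot\gamma_\theta([0,1])=\gamma_0([s_0,s_0+1])\subset E,
\]
so \(E\) is a curvature-\(\kappa\) Kakeya set. Compactness is immediate, since \(E\) is the continuous image of a compact interval. If the definition also requires the opposite orientation, i.e.\ curvature \(-\kappa\), I would add the analogous second trajectory; this changes neither compactness nor dimension.

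For the dimension count, \(E\) is a smooth Euclidean curve of finite length, so \(\dim_{\mathrm{Euc}}E\le1\). It is also a horizontal curve of CC--length \(2\pi/\kappa+1\). Since the CC distance along it is bounded by arclength, \(\mathcal H^1_{\CC}(E)\le 2\pi/\kappa+1<\infty\), and so \(\dim_{\Hh}E\le1\). For the lower bounds, \(E\) is a nondegenerate continuum in both metrics, hence has dimension at least \(1\); alternatively, the horizontal projection \((x,y,t)\mapsto(x,y)\) is Lipschitz from both metrics and maps \(E\) onto a circular arc of positive length. Lebesgue nullity follows from \(\dim E=1<3\).

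I expect the main obstacle to be the identity in the key step, specifically matching the vertical coordinate. Sign conventions in the group law and in the area term must be checked so that the translated curve is \(\gamma_{\kappa s_0}\) and not its reflection. I would settle this by a direct computation with the explicit parametrization of CC--geodesics \emph{stated earlier in the paper}.
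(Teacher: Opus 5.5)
Your proposal is correct and is essentially the paper's proof: the same compact set $\widetilde\gamma_{0,\kappa}([0,2\pi/\kappa+1])$, the same sliding identity $\widetilde\gamma_{0,\kappa}(a)^{-1}\cdot\widetilde\gamma_{0,\kappa}(a+s)=\widetilde\gamma_{\kappa a,\kappa}(s)$ applied with $a=\theta/\kappa$, and the same Lipschitz-parametrisation upper bounds for both dimensions. The differences are cosmetic: you justify the identity by uniqueness of horizontal lifts rather than by direct computation; you get the lower bound from connectedness or the Lipschitz horizontal projection rather than from the dimension of a smooth horizontal curve; and your labelling of initial directions by $e^{i\theta}$ differs from the paper's $(-\sin\theta,\cos\theta)$ by a rotation through $\pi/2$, which is harmless because every $\theta$ is quantified.
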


The paper is organised as follows.  Section~\ref{sec:preliminaries} recalls
the Heisenberg geometry, describes the minimising CC--geodesics, and compares
Definition~\ref{def:cc-kakeya} with earlier notions of Kakeya sets in the Heisenberg group.  In
Section~\ref{sec:proof} we prove Theorem \ref{thm:main} and Corollary \ref{thm:main2}.  The fixed-curvature
construction is given in Section~\ref{sec:further-results}. The proof of Theorem \ref{thm:main4} is contained in Section~\ref{sec:further-results} since it uses results from this section.

\section{Preliminaries}\label{sec:preliminaries}
We briefly recall some basics about  \(\Hh\). For a general
background, see \cite{FR16,Fol89,Ste93,Tha98}; for a more geometric treatment,
see  \cite{Ladonna}. We realise $\Hh$ as the manifold \(\R^3\) endowed with the
group law
\[
(x,y,t)\cdot(x',y',t')
:=
\Bigl(x+x',\,y+y',\,t+t'+\tfrac12(xy'-yx')\Bigr).
\]
The \textit{(canonical) left-invariant horizontal vector fields}  are
\[
X:=\partial_x - \tfrac{y}{2}\partial_t,
\qquad
Y:=\partial_y + \tfrac{x}{2}\partial_t\,.
\]
For every \(p\in\mathbb H^1\), define $\Delta_p:=\operatorname{span}\{X(p),Y(p)\}
    \subset T_p\mathbb H^1.$
The family \(
    \Delta:=\{\Delta_p\}_{p\in\mathbb H^1}
\)
 is  the
\emph{horizontal distribution}.
 A \(C^1\) curve
\(\gamma(s)=(x(s),y(s),t(s))\) is \emph{horizontal} if
\(\dot\gamma(s)\in\Delta_{\gamma(s)}\)  for every \(s\), equivalently, if 
there are functions \(a,b\)  such that
\[
\dot\gamma(s)
=a(s)X(\gamma(s))+b(s)Y(\gamma(s)).
\]
Comparing the coefficients of \(\partial_x\) and \(\partial_y\) gives
\(a(s)=\dot x(s)\) and \(b(s)=\dot y(s)\); comparing the coefficient  of
\(\partial_t\) then gives
\begin{equation}\label{eq:horiz}
\dot t(s)
=
\frac{1}{2}\bigl(x(s)\dot y(s)-y(s)\dot x(s)\bigr).
\end{equation}
Thus,  a \(C^1\) curve is horizontal if and only if it satisfies
\eqref{eq:horiz}. Its Carnot--Carath\'eodory length is
\[
L_{\CC}(\gamma)
:=
\int\sqrt{\dot x(s)^2+\dot y(s)^2}\,ds,
\] 
and the Carnot--Carath\'eodory distance is
\[
d_{\CC}(p,q)
:=
\inf\{L_{\CC}(\gamma):\gamma\text{ is horizontal from }p\text{ to }q\}.
\]

The study  of the Carnot--Carathéodory geodesics (in short  CC--geodesics) in the Heisenberg group goes
back to Gaveau \cite{Gav75,Gav77}. Later on, Monti 
\cite{Mon00} gives their explicit description. In the present normalisation, the
unit-speed minimising  CC--geodesics issuing from the identity $e \in \Hh$ admit the
following  description; see for instance,
\cite[\S~1.4.1 and Equation~(1.4.2)]{Ladonna}.
For \(k\in\mathbb R\), set
\[
I_k:=
\begin{cases}
\left[0,\dfrac{2\pi}{|k|}\right], & k\neq 0,\\[6pt]
[0,\infty), & k=0.
\end{cases}
\]
For \(\theta\in[0,2\pi)\) and \(k\in\mathbb R\), define \(\widetilde\gamma_{\theta,k}:I_k\to\Hh\) by
 \[
\widetilde\gamma_{\theta,k}(s)
=
\bigl(x_{\theta,k}(s),y_{\theta,k}(s),t_k(s)\bigr),
\]
where, for \(k\neq0\),
\begin{equation}\label{curves}
\begin{aligned}
x_{\theta,k}(s)
&=
\frac{\cos\theta(\cos(ks)-1)-\sin\theta\sin(ks)}{k},\\
y_{\theta,k}(s)
&=
\frac{\sin\theta(\cos(ks)-1)+\cos\theta\sin(ks)}{k},\\
t_k(s)
&=
\frac{ks-\sin(ks)}{2k^2},
\end{aligned}
\end{equation}
and, for  \(k=0\), \begin{equation}\label{eq:kappa=0}
\widetilde\gamma_{\theta,0}(s)
=
\bigl(-s\sin\theta,s\cos\theta,0\bigr).
\end{equation}
  Consequently, the unit-speed minimising \footnote{Here minimising means that \(\gamma_{\theta,k}\) has the least
CC length among all horizontal curves joining its endpoints.  Indeed,
the cut time is \(2\pi/|k|\) for \(k\neq0\) and \(+\infty\) for \(k=0\);
hence, since \(|k|\leq2\pi\), the interval \([0,1]\) does not extend
beyond the cut time, and \(\gamma_{\theta,k}\) is minimising on
\([0,1]\); see \cite[Definition~11 and \S~6.2]{BBN12}.} CC--geodesics of length \(1\)
issuing from \(e\) are precisely
\[
\gamma_{\theta,k}
:=
\left.\widetilde\gamma_{\theta,k}\right|_{[0,1]},
\qquad
\theta\in[0,2\pi),\quad k\in[-2\pi,2\pi].
\]
 A direct computation gives
 \[
\sqrt{\dot x_{\theta,k}(s)^2+\dot y_{\theta,k}(s)^2}=1,
\qquad s\in[0,1],
\]
so \(\gamma_{\theta,k}\) is parametrised by CC arclength and 
\[
L_{\CC}(\gamma_{\theta,k})
=
\int_0^1
\sqrt{\dot x_{\theta,k}(s)^2+\dot y_{\theta,k}(s)^2}\,ds
=1.
\]

\smallskip

Throughout, \(\mathcal L^n\) denotes the \(n\)-dimensional Lebesgue measure
on \(\mathbb R^n\). Under the identification \(\mathbb H^1\simeq\mathbb R^3\),
the measure \(\mathcal L^3\) is both a left- and right-invariant Haar measure
on \(\mathbb H^1\). We therefore take \(\mathcal L^3\) as our fixed Haar
measure. By \(\dim_{\mathcal H}^{\Hh}\) we denote Hausdorff dimension computed
with respect to either the Kor\'anyi metric \(d_K\) or the
Carnot--Carath\'eodory metric \(d_{\CC}\). These metrics are
bi-Lipschitz equivalent; see, for example,
\cite[Proposition~5.1.4]{BLU07}. Thus there exists \(L\geq1\) such that
\[
L^{-1}d_K(x,y)\leq d_{\CC}(x,y)\leq Ld_K(x,y)
\qquad\text{for all }x,y\in\Hh.
\]
By the bi-Lipschitz invariance of Hausdorff dimension, \(d_K\) and
\(d_{\CC}\) give the same Hausdorff dimension to every subset of
\(\Hh\).

\noindent We denote by $\tau_p:\Hh\to\Hh$ the left translation by $p \in \Hh$ defined by $\tau_p(q):=p\cdot q$.
We write
\[
\Gamma_{\theta,k}(p):=\tau_p\bigl(\gamma_{\theta,k}([0,1])\bigr)\,.
\]

\subsection{New and existing definitions of Kakeya sets in the first
Heisenberg group}\label{subsec:basic-definitions}
The following proposition identifies the geometric and parametric forms of
the  condition used in Definition \ref{def:cc-kakeya}. It also makes precise the condition in
Lukyanenko's question \cite{LukyanenkoQuestions}:
 \begin{quote}
Is there a Kakeya set in the Heisenberg group? (that is, the set should
have measure zero and contain a left-translate of every unit-length
geodesic starting at the origin)
\end{quote}

\begin{proposition}\label{prop:Lukyanenko-equivalence}
Let \(E\subset\Hh\) be Borel. The following conditions  are equivalent:
\begin{enumerate}
\item[\textup{(i)}]
For every unit-speed minimising CC--geodesic
\(\gamma:[0,1]\to\Hh\) with  \(\gamma(0)=e\), there exists \(p\in\Hh\)
such that
\[
\tau_p\bigl(\gamma([0,1])\bigr)\subset E.
\]
 \item[\textup{(ii)}]
For every \(\theta\in[0,2\pi)\) and every
\(k\in[-2\pi,2\pi]\), there exists \(p\in\Hh\) such that
\[
\Gamma_{\theta,k}(p)\subset E.
\]
\end{enumerate}
 \end{proposition}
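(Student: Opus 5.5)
The plan is to show that the set of curves appearing in (i) coincides with the family \(\{\gamma_{\theta,k}:\theta\in[0,2\pi),\ k\in[-2\pi,2\pi]\}\). Once this is established, (i) and (ii) are literally the same requirement: \(\Gamma_{\theta,k}(p)\) is by definition \(\tau_p(\gamma_{\theta,k}([0,1]))\), so in each direction the same translation point \(p\) works.

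\textbf{Direction (i)\(\Rightarrow\)(ii).} For every \(\theta\) and every \(|k|\le 2\pi\), the curve \(\gamma_{\theta,k}\) is a unit-speed CC--geodesic starting at \(e\).
\begin{itemize}
\item It starts at \(e\): setting \(s=0\) in \eqref{curves} or \eqref{eq:kappa=0} gives \((0,0,0)\).
\item It is horizontal: \(t_k\) satisfies \eqref{eq:horiz}, which is a direct differentiation.
\item It has unit speed: this is the computation recorded just before the proposition.
\item It is minimising on \([0,1]\): this is the content of the footnote, since \(1\le 2\pi/|k|\) is at most the cut time.
\end{itemize}
Applying (i) to \(\gamma=\gamma_{\theta,k}\) therefore gives a point \(p\) with \(\Gamma_{\theta,k}(p)\subset E\).

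\textbf{Direction (ii)\(\Rightarrow\)(i).} Let \(\gamma:[0,1]\to\Hh\) be any unit-speed minimising CC--geodesic with \(\gamma(0)=e\). I would invoke the classification of Heisenberg geodesics (Gaveau, Monti, \cite[\S1.4.1]{Ladonna}). It says every unit-speed geodesic from \(e\) is a restriction of some \(\widetilde\gamma_{\theta,k}\) with \(\theta\in[0,2\pi)\) and \(k\in\R\), and that it minimises on \([0,T]\) exactly when \(T\le 2\pi/|k|\).

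Minimality on \([0,1]\) then forces \(|k|\le2\pi\), so \(\gamma=\gamma_{\theta,k}\) for an admissible pair \((\theta,k)\). Condition (ii) supplies \(p\) with \(\tau_p(\gamma([0,1]))=\Gamma_{\theta,k}(p)\subset E\).

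\textbf{Main obstacle.} The only substantive step is this converse identification: that every unit-speed minimiser from \(e\) of length \(1\) has the explicit form \(\gamma_{\theta,k}\) with \(|k|\le 2\pi\). Two facts are needed here.
\begin{itemize}
\item Every length minimiser in \(\Hh\) is a normal Pontryagin extremal, because \(\Hh\) has no strictly abnormal minimisers. Integrating the Hamiltonian system with initial horizontal covector direction \(\theta\) and vertical covector component \(k\) then yields exactly \eqref{curves} and \eqref{eq:kappa=0}.
\item The cut time of \(\widetilde\gamma_{\theta,k}\) is \(2\pi/|k|\). If \(|k|>2\pi\), the curve has passed its cut time before \(s=1\) and is no longer minimising on \([0,1]\).
\end{itemize}
I would quote both facts from \cite{Mon00} and \cite[\S6.2]{BBN12} rather than reprove them. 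Borelness of \(E\) plays no role in the argument.
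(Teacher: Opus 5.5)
Your proposal is correct and follows essentially the same approach as the paper. The paper's proof is the one-line observation that the unit-speed minimising CC--geodesics of length \(1\) from \(e\) are precisely the curves \(\gamma_{\theta,k}\) with \(\theta\in[0,2\pi)\) and \(|k|\leq 2\pi\), a fact it takes from the cited geodesic classification and cut-time results; you have simply spelled out both inclusions of that identification, using the same sources.
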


 \begin{proof}
This follows immediately from the parametrisation
\(\{\gamma_{\theta,k}:\theta\in[0,2\pi),\ |k|\leq2\pi\}\) above.
\end{proof}

\begin{definition}[CC--geodesic Kakeya set]\label{def:cc-kakeya}
A Borel set \(E\subset\Hh\) is a \emph{CC--geodesic Kakeya set} if, for
every \(\theta\in[0,2\pi)\) and every \(k\in[-2\pi,2\pi]\), there is
\(p\in\Hh\) such that
\begin{equation}\label{eq:def.CCkak}
\Gamma_{\theta,k}(p)\subset E.
\end{equation}
\end{definition}

 \subsubsection{Comparison with existing definitions in the literature}
\label{subsubsec:comparison}

For \(A\subset[-2\pi,2\pi]\), define
\[
\mathcal K(A)
:=
\left\{
E\subset\Hh:
\forall\theta\in[0,2\pi)\ \forall k\in A\ \exists p\in\Hh,
\quad \Gamma_{\theta,k}(p)\subset E
\right\}.
\]

Then
\[
A_1\subset A_2
\quad\Longrightarrow\quad
\mathcal K(A_2)\subset\mathcal K(A_1).
\]
Notice that Definition~\ref{def:cc-kakeya} describes the class
\[
\mathcal K_{\mathrm{CC}}
:=
\left\{E\subset\Hh:E\text{ is Borel and }
E\in\mathcal K([-2\pi,2\pi])\right\}.
\]
F\"assler--Pinamonti--Wald \cite{FPW} formulate the horizontal Kakeya
condition using closed unit segments. After recentering the segment and
changing the translating point, the class they consider is
\(\mathcal K(\{0\})\). Liu \cite{L22} considers the same horizontal
Kakeya problem using open unit segments.\footnote{F\"assler--Pinamonti--Wald
explicitly present \cite[Definition~4.1]{FPW} as the definition recalled
from Liu. The two displayed formulations differ only in the endpoint
convention: \cite[Definition~1.1]{L22} uses open unit segments, whereas
\cite[Definition~4.1]{FPW} uses closed unit segments.}
If \(\mathcal K_{\mathrm L}\) denotes the family of Kakeya sets in
\cite{L22}, then
\begin{equation}\label{eq:K-class-inclusions}
\mathcal K_{\mathrm{CC}}
\subset
\mathcal K(\{0\})
\subset
\mathcal K_{\mathrm L}.
\end{equation}
The second inclusion reflects precisely this endpoint convention.

Liu proves that every \(E\in\mathcal K_{\mathrm L}\) has
\(\dim_{\mathcal H}^{\Hh}(E)\geq3\) \cite{L22}, and
F\"assler--Pinamonti--Wald prove, using different methods,  the same lower bound for every
\(E\in\mathcal K(\{0\})\) \cite{FPW}. The bound is sharp for both
classes, since
\[
\Pi:=\{(x,y,t)\in\Hh:t=0\}
\]
belongs to \(\mathcal K(\{0\})\) and has
\(\dim_{\mathcal H}^{\Hh}(\Pi)=3\). It follows from
\eqref{eq:K-class-inclusions} that every
\(E\in\mathcal K_{\mathrm{CC}}\) satisfies
\begin{equation}\label{eq:lowerbound}
\dim_{\mathcal H}^{\Hh}(E)\geq3.
\end{equation}
However, \(\Pi\notin\mathcal K_{\mathrm{CC}}\), because it contains no
left translate of a nonzero-curvature CC--geodesic segment. Thus the
sharpness of \eqref{eq:lowerbound} for \(\mathcal K_{\mathrm{CC}}\) does
not follow from the horizontal-plane example $\Pi$.

Venieri's Heisenberg Besicovitch problem \cite{Venieri14} uses
ordinary Euclidean segments and is therefore different from the
present CC--geodesic problem. Venieri later developed an abstract
framework for Kakeya-type families in metric spaces
\cite[Definition~3.1]{Venieri17}. Although the condition in
Definition~\ref{def:cc-kakeya} has a similar quantifier structure,
the dimension results in \cite{Venieri17} require additional geometric
axioms for the associated tubes. We do not use that framework here.

\section{Proof of the main results}\label{sec:proof}

\subsection{Proof of Theorem~\ref{thm:main}}
For \((r,\varphi)\in(0,\infty)\times\mathbb R\), define
\[
\Phi(r,\varphi)
:=
\left(
r\cos\varphi,\,
r\sin\varphi,\,
\frac{r^2\varphi}{2}
\right),
\qquad
\Sigma:=\Phi\bigl((0,\infty)\times\mathbb R\bigr).
\]
The  differential \(D\Phi(r,\varphi)\) has rank \(2\) at every point, so
\(\Phi\) is an immersion. Moreover,  \(\Phi\) is injective and its inverse on \(\Sigma\) is
\[
\Phi^{-1}(x,y,t)
=
\left(
\sqrt{x^2+y^2},
\frac{2t}{x^2+y^2}
\right).
\]
Since \(x^2+y^2>0\) on \(\Sigma\), this inverse is the restriction to
\(\Sigma\) of a smooth map defined on the open set
\[
\{(x,y,t)\in\mathbb R^3:x^2+y^2>0\}.
\]
Consequently, \(\Phi\) is a smooth embedding and \(\Sigma\) is a smooth
two-dimensional embedded Euclidean surface in
\(\mathbb R^3\simeq\Hh\).

 \begin{lemma}\label{lem:pgamma}
Define
\(
p:[0,2\pi)\times\bigl([-2\pi,2\pi]\setminus\{0\}\bigr)\to\Hh
\)
by
\begin{equation}\label{eq:p-choice}
p(\theta,k)
:=
\begin{cases}
\left(\dfrac{\cos\theta}{k},\dfrac{\sin\theta}{k},
      \dfrac{\theta}{2k^2}\right),&k>0,\\[8pt]
\left(\dfrac{\cos\theta}{k},\dfrac{\sin\theta}{k},
      \dfrac{\theta+\pi}{2k^2}\right),&k<0.
\end{cases}
\end{equation}
Then, for every \(\theta\in[0,2\pi)\) and every
\(k\in[-2\pi,2\pi]\setminus\{0\}\),
\[
p(\theta,k)\cdot\gamma_{\theta,k}([0,1])\subset\Sigma.
\]
\end{lemma}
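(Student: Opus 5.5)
The plan is to compute the left translate explicitly and show that each point has the form \(\Phi(r,\varphi)\) for a suitable \(r>0\) and \(\varphi\in\mathbb R\). Fix \(\theta\in[0,2\pi)\), \(k\in[-2\pi,2\pi]\setminus\{0\}\) and \(s\in[0,1]\). Write \(p=p(\theta,k)=(\cos\theta/k,\sin\theta/k,T)\), where \(T=\theta/(2k^2)\) if \(k>0\) and \(T=(\theta+\pi)/(2k^2)\) if \(k<0\). We expand \(p\cdot\gamma_{\theta,k}(s)\) using the group law and the formulas \eqref{curves}.

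\emph{Horizontal coordinates.} By the addition formulas,
\[
\frac{\cos\theta}{k}+x_{\theta,k}(s)=\frac{\cos(\theta+ks)}{k},\qquad
\frac{\sin\theta}{k}+y_{\theta,k}(s)=\frac{\sin(\theta+ks)}{k}.
\]
The point therefore lies on the circle of radius \(r=1/|k|>0\) about the \(t\)-axis. Its polar angle is \(\varphi=\theta+ks\) when \(k>0\). When \(k<0\) the factor \(1/k=-r\) flips the sign, so the angle is \(\varphi=\theta+ks+\pi\). In both cases \(r^2=1/k^2\).

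\emph{Vertical coordinate.} The third coordinate is \(T+t_k(s)+\tfrac12\bigl(p_x\,y_{\theta,k}(s)-p_y\,x_{\theta,k}(s)\bigr)\). In the symplectic term the \(\cos(ks)-1\) parts cancel and the \(\sin(ks)\) parts combine via \(\cos^2\theta+\sin^2\theta=1\), giving \(\sin(ks)/k^2\). Adding \(t_k(s)=(ks-\sin(ks))/(2k^2)\), the sine terms cancel, and the third coordinate equals \(T+s/(2k)\).

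\emph{Matching with \(\Phi\).} It remains to check that \(T+s/(2k)=r^2\varphi/2=\varphi/(2k^2)\).
\begin{itemize}
\item For \(k>0\): \(\varphi/(2k^2)=(\theta+ks)/(2k^2)=\theta/(2k^2)+s/(2k)\), which is the required identity.
\item For \(k<0\): \(\varphi/(2k^2)=(\theta+\pi)/(2k^2)+s/(2k)\), which again matches, because of the extra \(\pi\) built into the definition of \(p\).
\end{itemize}
Hence \(p(\theta,k)\cdot\gamma_{\theta,k}(s)=\Phi(r,\varphi)\in\Sigma\) for every \(s\in[0,1]\).

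The only delicate points are tracking the sign of \(k\), which forces the \(\pi\)-shift in the angle, and the cancellation in the vertical coordinate. I expect that cancellation to be the main, though routine, computation.
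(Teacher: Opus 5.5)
Your proposal is correct and takes essentially the same route as the paper: compute $p(\theta,k)\cdot\gamma_{\theta,k}(s)$ coordinatewise via the group law, and recognise it as $\Phi(1/|k|,\theta+ks)$ for $k>0$ and as $\Phi(1/|k|,\theta+ks+\pi)$ for $k<0$. One small precision: $\sin(ks)/k^2$ is the value of the bracket $p_x\,y_{\theta,k}(s)-p_y\,x_{\theta,k}(s)$, so the symplectic term itself is $\sin(ks)/(2k^2)$, and it is this quantity that cancels against the sine part of $t_k(s)$ to give the third coordinate $T+s/(2k)$.
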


\begin{proof}
First let \(k>0\), and write \(p(\theta,k)=(x_0,y_0,t_0)\). From
\eqref{curves} and the group law,
\[
\begin{aligned}
x_0+x_{\theta,k}(s)
&=\frac{\cos(\theta+ks)}{k},\\
y_0+y_{\theta,k}(s)
&=\frac{\sin(\theta+ks)}{k},\\
t_0+t_k(s)
+\frac12\bigl(x_0y_{\theta,k}(s)-y_0x_{\theta,k}(s)\bigr)
&=\frac{\theta+ks}{2k^2}.
\end{aligned}
\]
Hence
\[
p(\theta,k)\cdot\gamma_{\theta,k}(s)
=
\Phi\left(\frac1{|k|},\theta+ks\right)
\in\Sigma.
\]
If \(k<0\), we similarly obtain
\[
p(\theta,k)\cdot\gamma_{\theta,k}(s)
=
\Phi\left(\frac1{|k|},\theta+ks+\pi\right)
\in\Sigma.
\]
This proves the claim.
\end{proof}

\begin{proof}[Proof of Theorem~\ref{thm:main}]
The lower bound follows from \eqref{eq:lowerbound}. To prove sharpness,
set
\[
E:=\Sigma\cup\Pi,
\qquad
\Pi:=\{(x,y,t)\in\Hh:t=0\}.
\]
The set \(E\) is Borel. Fix
\((\theta,k)\in[0,2\pi)\times[-2\pi,2\pi]\). If \(k\neq0\),
Lemma~\ref{lem:pgamma} yields
\[
p(\theta,k)\cdot\gamma_{\theta,k}([0,1])
\subset\Sigma\subset E.
\]
If \(k=0\), then
\[
\gamma_{\theta,0}([0,1])\subset\Pi\subset E.
\]
Thus \(E\) is a CC--geodesic Kakeya set. Both \(\Sigma\) and \(\Pi\) are smooth Euclidean surfaces. Every
\(C^1\) Euclidean surface in \(\Hh\) has Heisenberg Hausdorff dimension
\(3\); see \cite[p.~414]{BDCKMT}. Therefore
\[
\dim_{\mathcal H}^{\Hh}(E)
=
\max\left\{
\dim_{\mathcal H}^{\Hh}(\Sigma),
\dim_{\mathcal H}^{\Hh}(\Pi)
\right\}
=3.
\]
Moreover, \(\mathcal L^3(E)=0\), and the proof is complete. 
\end{proof}

\subsection{Proof of Corollary~\ref{thm:main2}}
For every integer \(m\geq2\), set
\[
r_m:=\frac{m-1}{2m}.
\]
Starting from \(C_m^{(0)}=[0,1]\), construct \(C_m^{(n+1)}\) by
removing from the middle of every component interval of \(C_m^{(n)}\)
an open interval whose length is \(1/m\) times the length of that
component. Each component is therefore replaced by two closed intervals,
each scaled by the factor \(r_m\). Set
\[
C_m:=\bigcap_{n=0}^{\infty}C_m^{(n)}.
\]
Thus \(C_m\) is the middle-\(1/m\) Cantor set. By the standard
dimension formula for self-similar Cantor sets
\cite[Example~4.5]{Fal03}, we have
\[
\dim_{\mathcal H}^{\mathrm{Euc}}(C_m)
=
\frac{\log 2}{\log\!\left(\dfrac{2m}{m-1}\right)}
\longrightarrow1
\qquad\text{as }m\to\infty.
\]
Define
\[
K:=\bigcup_{m=2}^{\infty}C_m,
\qquad
N:=K\times[0,1]^2.
\]

\begin{lemma}\label{lem:N}
The set \(N\) satisfies
\[
\mathcal L^3(N)=0,
\qquad
\dim_{\mathcal H}^{\Hh}(N)=4.
\]
\end{lemma}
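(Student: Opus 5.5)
We need \(\mathcal L^3(N)=0\) and \(\dim_{\mathcal H}^{\Hh}(N)=4\), and we treat the two parts separately.

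For the measure statement, each \(C_m\) is compact with \(\mathcal L^1(C_m)=0\). Indeed, \(C_m^{(n)}\) consists of \(2^n\) intervals of length \(r_m^n\), so \(\mathcal L^1(C_m^{(n)})=(2r_m)^n=\bigl(\tfrac{m-1}{m}\bigr)^n\to0\). Hence \(K\) is a countable union of null sets and \(\mathcal L^1(K)=0\). By Fubini (Tonelli), \(\mathcal L^3(K\times[0,1]^2)=\mathcal L^1(K)\cdot\mathcal L^2([0,1]^2)=0\). The set \(N\) is Borel, being a countable union of compact sets.

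For the dimension, the upper bound \(\dim_{\mathcal H}^{\Hh}(N)\le4\) is standard: the Heisenberg Hausdorff dimension of any subset of \(\Hh\) is at most the homogeneous dimension \(Q=4\). This follows, for instance, by covering bounded sets with \(O(\delta^{-4})\) Korányi balls of radius \(\delta\), using the Haar measure scaling \(\mathcal L^3(B_K(p,\delta))=c\,\delta^4\). Since \(N\) is bounded, the bound applies directly.

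For the lower bound, by monotonicity and countable stability it suffices to show
\[
\dim_{\mathcal H}^{\Hh}\bigl(C_m\times[0,1]^2\bigr)\ge 3+\dim_{\mathcal H}^{\mathrm{Euc}}(C_m)=:3+s_m
\]
for each \(m\), and then let \(m\to\infty\). I would use the mass distribution principle. Let \(\mu_m\) be the natural self-similar probability measure on \(C_m\); it satisfies \(\mu_m(I)\lesssim |I|^{s_m}\) for intervals \(I\). Take \(\nu:=\mu_m\otimes\mathcal L^2|_{[0,1]^2}\), a probability measure on \(C_m\times[0,1]^2\).

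The key estimate is that a Korányi ball \(B_K(q,\delta)\) with \(\delta\le1\) is contained in a Euclidean box. We have \(d_K(q,q')\ge \max\{|x-x'|,|y-y'|\}\) and \(d_K\ge |t'-t-\tfrac12(xy'-yx')|^{1/2}\) up to constants. So within the ball, \(x'\) lies in an interval of length \(\lesssim\delta\) and \(y'\) in an interval of length \(\lesssim\delta\). For fixed \((x',y')\), \(t'\) lies in an interval of length \(\lesssim\delta^2\), centered at \(t+\tfrac12(xy'-yx')\). Integrating in the \((y',t')\) variables with \(\mathcal L^2\) over each fiber of fixed \(x'\) therefore gives measure \(\lesssim\delta\cdot\delta^2\). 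Then
\[
\nu(B_K(q,\delta))\lesssim \mu_m([x-\delta,x+\delta])\cdot\delta^3\lesssim\delta^{s_m+3}.
\]
The mass distribution principle (valid in any metric space) yields \(\dim_{\mathcal H}^{\Hh}(C_m\times[0,1]^2)\ge 3+s_m\). Letting \(m\to\infty\) and using \(s_m\to1\) gives \(\dim_{\mathcal H}^{\Hh}(N)\ge4\).

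The main point needing care is the fiberwise estimate of the ball. Specifically, one must check that the sheared \(t\)-interval still has length \(\lesssim\delta^2\) uniformly as \(y'\) varies, so that Fubini applies cleanly to the product measure. This works because the shear is affine in \((x',y')\) and only translates the interval without changing its length.
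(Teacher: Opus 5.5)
Your proof is correct, but it reaches the lower bound by a different route than the paper. The paper works entirely through Euclidean dimension. It uses the Euclidean product formula to get $\dim_{\mathcal H}^{\mathrm{Euc}}(N)=3$, and then applies the dimension-comparison principle $\dim_{\mathcal H}^{\Hh}(A)\ge 2\dim_{\mathcal H}^{\mathrm{Euc}}(A)-2$. The value $4$ is thus forced by the Euclidean dimension alone, with no information about how $N$ sits relative to the group law.

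You instead build the Frostman-type measure $\mu_m\otimes\mathcal L^2$ and bound the measure of Korányi balls directly via their sheared-box description. This argument is self-contained: it uses only the mass distribution principle and the Frostman property of $\mu_m$. It also gives a sharper bound for each $m$, namely $\dim_{\mathcal H}^{\Hh}(C_m\times[0,1]^2)\ge 3+s_m$. The comparison principle applied to the same piece yields only $2+2s_m$, since $\dim_{\mathcal H}^{\mathrm{Euc}}(C_m\times[0,1]^2)=2+s_m$. The gain comes from exploiting that the full $t$-direction, which carries homogeneous weight $2$, is present in $N$.

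The price is that your argument is tied to the product structure of $N$. The paper's argument applies verbatim to any set of Euclidean dimension $3$. Your treatment of the measure-zero part, computing $\mathcal L^1(C_m^{(n)})=((m-1)/m)^n\to0$ instead of deducing nullity from $\dim_{\mathcal H}^{\mathrm{Euc}}(C_m)<1$, is equally valid.
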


\begin{proof}
For each $m \geq 2$, since \(\dim_{\mathcal H}^{\mathrm{Euc}}(C_m)<1\), we have \(\mathcal{L}^1(C_m)=0\). Hence \(\mathcal L^1(K)=0\), and
so \(\mathcal L^3(N)=0\). For every \(m\geq2\), 
\cite[Product Formula~7.2]{Fal03} gives
\[
\begin{aligned}
\dim_{\mathcal H}^{\mathrm{Euc}}(N)
&\geq
\dim_{\mathcal H}^{\mathrm{Euc}}\bigl(C_m\times[0,1]^2\bigr)\\
&\geq
\dim_{\mathcal H}^{\mathrm{Euc}}(C_m)+2.
\end{aligned}
\]
Letting \(m\to\infty\), we obtain
\(\dim_{\mathcal H}^{\mathrm{Euc}}(N)\geq3\). Since
\(N\subset\mathbb R^3\), it follows that
\[
\dim_{\mathcal H}^{\mathrm{Euc}}(N)=3.
\]
The dimension-comparison principle
\cite[Theorem~2.7]{BDCKMT} now gives
\[
\begin{aligned}
\dim_{\mathcal H}^{\Hh}(N)
&\geq
\max\left\{
\dim_{\mathcal H}^{\mathrm{Euc}}(N),
2\dim_{\mathcal H}^{\mathrm{Euc}}(N)-2
\right\}=4.
\end{aligned}
\]
The reverse inequality follows from
\(\dim_{\mathcal H}^{\Hh}(\Hh)=4\), so
\(\dim_{\mathcal H}^{\Hh}(N)=4\), and the proof is complete.
\end{proof}

\begin{proof}[Proof of Corollary~\ref{thm:main2}]
Let \(E=\Sigma\cup\Pi\) be the CC--geodesic Kakeya set constructed in
the proof of Theorem~\ref{thm:main}, and define
\[
F:=E\cup N.
\]
The sets \(E\) and \(N\) are Borel and \(\mathcal L^3\)-null, so the
same is true of \(F\). Since \(E\subset F\) and \(E\) is a
CC--geodesic Kakeya set, \(F\) is also a CC--geodesic Kakeya set.
Finally, Lemma~\ref{lem:N} and \(N\subset F\subset\Hh\) imply
\[
\dim_{\mathcal H}^{\Hh}(F)=4,
\]
and the claim follows.
\end{proof}

\begin{remark}
    The particular choice of the auxiliary set $N$ as in Lemma \ref{lem:N} is not essential, in the sense that if a Borel set  \(N\subset\Hh\) satisfies
\[
\mathcal L^3(N)=0,
\qquad
\dim_{\mathcal H}^{\Hh}(N)=4,
\] 
then the set  \(E\cup N\) is a  CC--geodesic Kakeya set of zero
Lebesgue measure and full Heisenberg Hausdorff dimension. For
completeness, Lemma \ref{lem:N} gives an explicit elementary choice of such a set
\(N\).
\end{remark}

\section{Fixed-curvature Kakeya sets and compact CC--geodesic Kakeya
sets}\label{sec:further-results}

\subsection{Fixed-Curvature Kakeya sets: Definition and elementary properties}

 \begin{definition}[Curvature-\(\kappa\) Kakeya set] \label{def:kappa-Kakeya}
 Fix \(\kappa\in[0,2\pi]\). A Borel set \(E\subset\Hh\) is called a
\emph{curvature-\(\kappa\) Kakeya set} if
\(
E\in\mathcal K(\{\kappa\});
\)
that is, if for every \(\theta\in[0,2\pi)\) there exists \(p\in\Hh\)
such that
\(
\Gamma_{\theta,\kappa}(p)\subset E.
\)
 \end{definition}

 \begin{remark}\label{rem:fixed-curvature}
The following observations clarify Definition~\ref{def:kappa-Kakeya}.
\begin{enumerate}
\item
When \(\kappa=0\), Definition~\ref{def:kappa-Kakeya} recovers the Kakeya sets of F\"assler--Pinamonti--Wald \cite{FPW} and implies Liu's
open-segment condition \cite{L22}.

\item
Fix \(\kappa>0\), and let
\(
\pi_{xy}:\Hh\to\mathbb R^2\)
with 
\(
\pi_{xy}(x,y,t):=(x,y),
\) be the horizontal projection.
Using  \eqref{curves} we see that 
\[
\left(x_{\theta,\kappa}(s)+\frac{\cos\theta}{\kappa}\right)^2
+
\left(y_{\theta,\kappa}(s)+\frac{\sin\theta}{\kappa}\right)^2
=
\frac1{\kappa^2}.
\]
Thus \(\pi_{xy}(\gamma_{\theta,\kappa}([0,1]))\) is a circular arc
with centre
\[
\left(-\frac{\cos\theta}{\kappa},
      -\frac{\sin\theta}{\kappa}\right)
\]
and radius \(1/\kappa\). It is a proper arc when
\(0<\kappa<2\pi\), and it is the whole circle when \(\kappa=2\pi\).
Since a circle of radius \(R\) has Euclidean curvature \(1/R\), this
projected arc has curvature \(\kappa\). For \(\kappa=0\), the projection
of \(\gamma_{\theta,0}([0,1])\) is a straight segment and has curvature
zero.
\item
For  \(0<\kappa\leq2\pi\) we have $\mathcal K(\{\kappa\})
=
\mathcal K(\{-\kappa\})
=
\mathcal K(\{-\kappa,\kappa\}).$  Indeed, a direct calculation from \eqref{curves}
and the group law gives, with angles understood modulo \(2\pi\),
\begin{equation}\label{eq:reverse-geodesic}
\gamma_{\theta,-\kappa}([0,1])
=
\gamma_{\theta,-\kappa}(1)\cdot
\gamma_{\theta-\kappa+\pi,\kappa}([0,1]).
\end{equation}
Hence
\[
\Gamma_{\theta,-\kappa}(p)
=
\Gamma_{\theta-\kappa+\pi,\kappa}
\bigl(p\cdot\gamma_{\theta,-\kappa}(1)\bigr).
\]
Since \(\theta\mapsto\theta-\kappa+\pi\) is a bijection of
\(\mathbb R/(2\pi\mathbb Z)\), and since
\(p\cdot\gamma_{\theta,-\kappa}(1)\) ranges over all of \(\Hh\) as
\(p\) ranges over \(\Hh\), the two  families with curvatures  \(\kappa\) and \(-\kappa\) coincide.
\item
A Borel set \(E\subset\Hh\) is a CC--geodesic Kakeya set if and only if
\[
E\in\mathcal K(\{\kappa\})
\qquad\text{for every }\kappa\in[0,2\pi].
\]
\end{enumerate}
\end{remark}

\subsection{Proof of Theorem~\ref{thm:main3}}

\begin{proof}[Proof of Theorem~\ref{thm:main3}]
Fix \(0<\kappa\leq2\pi\). We extend
\(\widetilde\gamma_{\theta,\kappa}\) to all \(s\in\mathbb R\) using
 formula \eqref{curves}. The extended curve is a smooth
unit-speed horizontal normal geodesic trajectory, although it is no longer
globally minimising after the cut time \(2\pi/\kappa\); see
\cite[\S~6.2, Equation~(24)]{BBN12}. Define
\begin{equation}\label{eq:Kkappa}
K_\kappa
:=
\widetilde\gamma_{0,\kappa}
\left(\left[0,\frac{2\pi}{\kappa}+1\right]\right).
\end{equation}
This set is compact. A direct calculation from \eqref{curves} and the group law of \(\Hh\) gives
\begin{equation}\label{eq:sliding}
\widetilde\gamma_{0,\kappa}(a+s)
=
\widetilde\gamma_{0,\kappa}(a)\cdot
\widetilde\gamma_{\kappa a,\kappa}(s),
\qquad a,s\in\mathbb R,
\end{equation}
where the first parameter \(\kappa a\) is understood modulo \(2\pi\). Given
\(\theta\in[0,2\pi)\), set \(a:=\theta/\kappa\). Then
\(0\leq a<2\pi/\kappa\), and \eqref{eq:sliding} yields
\[
\begin{aligned}
\Gamma_{\theta,\kappa}
\bigl(\widetilde\gamma_{0,\kappa}(a)\bigr)
&=
\widetilde\gamma_{0,\kappa}(a)\cdot
\gamma_{\theta,\kappa}([0,1])\\
&=
\widetilde\gamma_{0,\kappa}(a+[0,1])
\subset K_\kappa,
\end{aligned}
\]
that is  \(K_\kappa\in\mathcal K(\{\kappa\})\). Now set \(I_\kappa=[0,2\pi/\kappa+1]\). The map
\(s\mapsto\widetilde\gamma_{0,\kappa}(s)\) is smooth on the compact
interval \(I_\kappa\), so it 
is Euclidean Lipschitz. Since Lipschitz maps do not increase Hausdorff
dimension, see e.g. \cite[Corollary~2.4(a)]{Fal03}, we have 
\[
\begin{aligned}
\dim_{\mathcal H}^{\mathrm{Euc}}(K_\kappa)
&=
\dim_{\mathcal H}^{\mathrm{Euc}}
\bigl(\widetilde\gamma_{0,\kappa}(I_\kappa)\bigr) \leq \dim_{\mathcal H}^{\mathrm{Euc}}(I_\kappa)
=1.
\end{aligned}
\]
Moreover, $\widetilde\gamma_{0,\kappa}$ is horizontal and has unit horizontal speed.
For \(s<t\) in \(I_\kappa\),
\[
\begin{aligned}
d_{\CC}\bigl(
\widetilde\gamma_{0,\kappa}(s),
\widetilde\gamma_{0,\kappa}(t)
\bigr)
&\leq
L_{\CC}\left(
\left.\widetilde\gamma_{0,\kappa}\right|_{[s,t]}
\right)\\
&=
\int_s^t
\sqrt{\dot x_{0,\kappa}(r)^2+\dot y_{0,\kappa}(r)^2}\,dr\\
&=t-s.
\end{aligned}
\]
Thus $\widetilde\gamma_{0,\kappa}$ is \(1\)-Lipschitz from
\(I_\kappa\) to \((\Hh,d_{\CC})\).  Hence by the Euclidean result, see e.g.  
\cite[Proposition~2.2 and Corollary~2.4(a)]{Fal03}, which applies verbatim to the metric space \((\Hh,d_{\CC})\),  we obtain
\[
\dim_{\mathcal H}^{\Hh}(K_\kappa)\leq1.
\]
On the other hand,
\(
\gamma_{0,\kappa}([0,1])\subset K_\kappa.
\)
Since \(\gamma_{0,\kappa}([0,1])\) is a smooth regular Euclidean
curve and a horizontal \(C^1\) curve,
\[
\dim_{\mathcal H}^{\mathrm{Euc}}
\bigl(\gamma_{0,\kappa}([0,1])\bigr)
=
\dim_{\mathcal H}^{\Hh}
\bigl(\gamma_{0,\kappa}([0,1])\bigr)
=1,
\]
where the second equality follows from
\cite[p.~414]{BDCKMT}. Consequently,
\[
\dim_{\mathcal H}^{\mathrm{Euc}}(K_\kappa)\geq1,
\qquad
\dim_{\mathcal H}^{\Hh}(K_\kappa)\geq1.
\]
Together with the previously proved upper bounds, this completes the
proof.
\end{proof}
The mechanism behind Theorem~\ref{thm:main3} has a simple planar analogue. Recall that a
\(\Gamma\)-Besicovitch set is a planar set \(B\) such that, for every
\(\theta\in[0,2\pi)\), there exists \(v_\theta\in\mathbb R^2\) with
\[
R_\theta\Gamma+v_\theta\subset B,
\]
where \(R_\theta\) denotes counterclockwise rotation through the angle
\(\theta\). Altaf notes in \cite{Altaf24} that, when \(\Gamma\) is a
circular arc, the circle containing \(\Gamma\) is a
\(\Gamma\)-Besicovitch set of Hausdorff dimension \(1\), since
rotations about its centre preserve the circle. Analogously,
\eqref{eq:sliding} shows that \(K_\kappa\) contains a suitable
Heisenberg left translate of
\(\gamma_{\theta,\kappa}([0,1])\) for every
\(\theta\in[0,2\pi)\).

\begin{remark}\label{rem:kneq0} The main difference between the case $\kappa \neq 0$ and $\kappa =0$ lies in the fact that in the former case the horizontal velocity exhausts all unit directions, while in the latter case it is constant, and hence the
horizontal direction remains unchanged. Indeed, 
differentiating the horizontal coordinates from \eqref{curves} we get 
\[
\bigl(\dot x_{\theta,\kappa}(s),\dot y_{\theta,\kappa}(s)\bigr)
   =\bigl(-\sin(\theta+\kappa s),\cos(\theta+\kappa s)\bigr).
\]
If \(\kappa\neq0\), then \(\theta+\kappa s\) runs through all angles as
\(s\) varies over an interval of length \(2\pi/|\kappa|\), so the
horizontal velocity exhausts all unit directions.  If \(\kappa=0\),
the velocity is constant and equal to \((-\sin\theta,\cos\theta)\). 
\end{remark}

\begin{remark}\label{rem:sharp-fixed-curvature}
Fix \(0<\kappa\leq2\pi\). Every
\(E\in\mathcal K(\{\kappa\})\) contains a left translate of
\(\gamma_{0,\kappa}([0,1])\). Left translations are isometries for \(d_{\CC}\) and
bi-Lipschitz maps for the Euclidean metric. Hence they preserve,
respectively, Heisenberg and Euclidean Hausdorff dimension.
Therefore
\[
\dim_{\mathcal H}^{\mathrm{Euc}}(E)\geq1,
\qquad
\dim_{\mathcal H}^{\Hh}(E)\geq1,
\]
and Theorem~\ref{thm:main3} shows that both bounds are sharp. This
conclusion does not extend to \(\kappa=0\): in that case the sharp
 Heisenberg lower bound is \(3\)
\cite{L22,FPW}.
\end{remark}
\subsection{Proof of Theorem~\ref{thm:main4}}
For \(j=0,1,2,\ldots\), set
\[
\rho_j
:=
\begin{cases}
2\pi, & j=0,\\[2mm]
\dfrac{2\pi}{j}, & j\geq1.
\end{cases}
\]
For \(0<\kappa\leq\rho_j\), let
\[
T_\kappa:=\frac{2\pi}{\kappa},
\qquad
\ell_j(\kappa):=
\min\{2,T_\kappa+1-j\},
\]
and set \(\ell_j(0):=2\). \footnote{Notice that \(\ell_j(\kappa)\) is always positive on its stated
domain. Indeed, for \(j\geq1\), the condition
\(0<\kappa\leq\rho_j=2\pi/j\) gives
\(T_\kappa=2\pi/\kappa\geq j\), and hence
\(
1\leq\ell_j(\kappa)\leq2.
\)
.} We see that 
\[
\widetilde\gamma_{0,\kappa}
\bigl([j,j+\ell_j(\kappa)]\bigr) \subset K_\kappa,
\]
with  \(K_\kappa\) as in \eqref{eq:Kkappa}. By  \eqref{eq:sliding} we obtain 
\begin{equation}\label{eq:E_j}
\begin{aligned}
&\widetilde\gamma_{0,\kappa}(j)^{-1}\cdot
\widetilde\gamma_{0,\kappa}
\bigl([j,j+\ell_j(\kappa)]\bigr)\\
&\qquad=
\left\{
\widetilde\gamma_{0,\kappa}(j)^{-1}\cdot
\widetilde\gamma_{0,\kappa}(j+s):
0\leq s\leq\ell_j(\kappa)
\right\}\\
&\qquad=
\left\{
\widetilde\gamma_{j\kappa,\kappa}(s):
0\leq s\leq\ell_j(\kappa)
\right\}
=
\widetilde\gamma_{j\kappa,\kappa}
\bigl([0,\ell_j(\kappa)]\bigr).
\end{aligned}
\end{equation} Here and below, the first parameter \(j\kappa\) is understood modulo
\(2\pi\). We define 
\[
E_j
:=
\bigcup_{0\leq\kappa\leq\rho_j}
\widetilde\gamma_{j\kappa,\kappa}
\bigl([0,\ell_j(\kappa)]\bigr),
\]
where at \(\kappa=0\) we set
\[
\widetilde\gamma_{j\kappa,\kappa}(s)
:=\widetilde\gamma_{0,0}(s)=(0,s,0).
\]
For \(0<\kappa\leq\rho_j\), identity~\eqref{eq:E_j} shows that
\(\widetilde\gamma_{j\kappa,\kappa}([0,\ell_j(\kappa)])\) is the portion of
\(K_\kappa\) parametrised between \(j\) and \(j+\ell_j(\kappa)\), translated so that its initial point is the identity. Finally, let
\[
D_2
:=
\left\{(x,y,0)\in\Hh:x^2+y^2\leq4\right\}
\]
and set
\begin{equation}\label{eq:compact-sharp-set}
E_{\mathrm c}
:=
D_2\cup\bigcup_{j=0}^{\infty}E_j.
\end{equation}
\begin{proof}[Proof of Theorem~\ref{thm:main4}]
  The lower bound follows from Theorem~\ref{thm:main}.  We now 
show that the set \(E_{\mathrm c}\) in
\eqref{eq:compact-sharp-set} is a CC--geodesic Kakeya set.  Fix $\theta\in[0,2\pi)$, $0<k\leq2\pi$, and write
\[
\frac{\theta}{k}=j+u,
\qquad
j:=\left\lfloor\frac{\theta}{k}\right\rfloor,
\qquad
0\leq u<1.
\]
If \(j=0\), then \(k\leq\rho_0=2\pi\); if \(j\geq1\), then
\(jk\leq\theta<2\pi\), and hence \(k<2\pi/j=\rho_j\).  Thus \(k\leq\rho_j\) for this value of \(j\).  Moreover,
\[
u+1\leq2,
\qquad
u+1<T_k+1-j,
\]
and therefore \(u+1\leq\ell_j(k)\).
For \(s\in[0,1]\), applying \eqref{eq:sliding} at the parameter values
\(j\) and \(j+u\) gives
\[
\begin{aligned}
\widetilde\gamma_{0,k}(j)\cdot
\widetilde\gamma_{jk,k}(u+s)
&=\widetilde\gamma_{0,k}(j+u+s)\\
&=\widetilde\gamma_{0,k}(j+u)\cdot
  \widetilde\gamma_{k(j+u),k}(s)\\
&=\widetilde\gamma_{0,k}(j)\cdot
  \widetilde\gamma_{jk,k}(u)\cdot
  \widetilde\gamma_{\theta,k}(s),
\end{aligned}
\]
where we have used that \(k(j+u)=\theta\). Therefore, 
\[
\widetilde\gamma_{jk,k}(u+s)
=
\widetilde\gamma_{jk,k}(u)\cdot
\widetilde\gamma_{\theta,k}(s).
\]
Consequently, with 
\(
p:=\widetilde\gamma_{jk,k}(u),
\)
we have
\[
\Gamma_{\theta,k}(p)
=
p\cdot\gamma_{\theta,k}([0,1])
=
\widetilde\gamma_{jk,k}(u+[0,1])
\subset E_j\subset E_{\mathrm c}.
\]
Thus  condition \eqref{eq:def.CCkak} holds for every \(k\in(0,2\pi]\).  By
Remark~\ref{rem:fixed-curvature}(3), it also holds for every
\(k\in[-2\pi,0)\).  Finally, for \(k=0\),
\[
\Gamma_{\theta,0}(e)
=
\gamma_{\theta,0}([0,1])
\subset D_2\subset E_{\mathrm c},
\]
see also Remark \ref{rem:D_2} later on. We next prove compactness.  The set \(D_2\) is compact.  For each fixed
\(j\), define
\[
A_j
:=
\left\{
(\kappa,s)\in\mathbb R^2:
0\leq\kappa\leq\rho_j,
\quad
0\leq s\leq\ell_j(\kappa)
\right\}.
\]
The function \(\ell_j\) is continuous on \([0,\rho_j]\), since as $\kappa \rightarrow 0$
\[
\ell_j(\kappa)
\rightarrow2=\ell_j(0).
\]
Hence \(A_j\) is compact.  Define
\[
F_j(\kappa,s)
:=
\widetilde\gamma_{j\kappa,\kappa}(s);
\]
that is, for \(\kappa>0\), using \eqref{curves}, its coordinates may be written as
\[
F_j(\kappa,s)
=
\left(
\frac{\cos(\kappa(j+s))-\cos(j\kappa)}{\kappa},
\frac{\sin(\kappa(j+s))-\sin(j\kappa)}{\kappa},
\frac{\kappa s-\sin(\kappa s)}{2\kappa^2}
\right).
\]
Taylor expansions of the three coordinate functions at
\(\kappa=0\) show that the apparent singularities are removable.
Hence \(F_j\) extends smoothly to a neighbourhood of
\([0,\rho_j]\times[0,2]\), with
\[
F_j(0,s)=(0,s,0), 
\]
  and so  the set 
\(
E_j=F_j(A_j)
\)
is compact. It remains to control the sets \(E_j\) as \(j\to\infty\).  Let
\[
q=\widetilde\gamma_{j\kappa,\kappa}(s)=(x,y,t)\in E_j,
\qquad j\geq1.
\]
Since
\(\widetilde\gamma_{j\kappa,\kappa}(0)=e\), the horizontal projection
of \(\widetilde\gamma_{j\kappa,\kappa}|_{[0,s]}\) starts at
\((0,0)\) and ends at \((x,y)\). This projected curve has unit
Euclidean speed, and so the distance between its endpoints is bounded by its
length: 
\[
\begin{aligned}
\sqrt{x^2+y^2}
&\leq
\int_0^s
\sqrt{
\dot x_{j\kappa,\kappa}(r)^2+
\dot y_{j\kappa,\kappa}(r)^2
}\,dr\\
&=s\leq2.
\end{aligned}
\]
Moreover, \(t=0\) when \(\kappa=0\), while for \(\kappa>0\), using
\(r-\sin r\leq r^3/6\) for \(r\geq0\), we obtain
\[
0\leq t
=
\frac{\kappa s-\sin(\kappa s)}{2\kappa^2}
\leq
\frac{\kappa s^3}{12}
\leq
\frac{4\pi}{3j}.
\]
Summarising the above, we obtain  \((x,y,0)\in D_2\) and
\[
\operatorname{dist}_{\mathrm{Euc}}(q,D_2)
\leq
\frac{4\pi}{3j}.
\]
Now let \((q_n)\) be a sequence in \(E_{\mathrm c}\).  If infinitely
many terms lie in \(D_2\), or in one fixed \(E_j\), compactness of that
set gives a convergent subsequence.  Otherwise, after passing to a
subsequence, we may write \(q_n\in E_{j_n}\) with \(j_n\to\infty\).
Choose \(z_n\in D_2\) with
\[
\lvert q_n-z_n\rvert
\leq
\frac{4\pi}{3j_n}.
\]
A subsequence of \((z_n)\) converges to a point \(z\in D_2\), and the
preceding estimate then gives \(q_n\to z\) along the same subsequence.
Thus \(E_{\mathrm c}\) is compact. Since \(F_j\) is \(C^1\) on a neighbourhood of the compact rectangle
\([0,\rho_j]\times[0,2]\),  it is a standard fact that $F_j$ is Lipschitz on  this rectangle. 
 Hence, by
\cite[Proposition~2.2 and Corollary~2.4(a)]{Fal03},
\[
\dim_{\mathcal H}^{\mathrm{Euc}}(E_j)\leq2.
\]
Summarising, by 
\cite[\S~2.2]{Fal03}, we get 
\[
\begin{aligned}
\dim_{\mathcal H}^{\mathrm{Euc}}(E_{\mathrm c})
&=
\max\left\{
\dim_{\mathcal H}^{\mathrm{Euc}}(D_2),
\sup_{j\geq0}\dim_{\mathcal H}^{\mathrm{Euc}}(E_j)
\right\}=2,
\end{aligned}
\]
which in turn, using
\cite[Theorem~2.7]{BDCKMT}, gives
\[
\dim_{\mathcal H}^{\Hh}(E_{\mathrm c})
\leq
\min\left\{
2\dim_{\mathcal H}^{\mathrm{Euc}}(E_{\mathrm c}),
\dim_{\mathcal H}^{\mathrm{Euc}}(E_{\mathrm c})+1
\right\}
=3.
\]
Together with Theorem~\ref{thm:main}, this yields
\[
\dim_{\mathcal H}^{\Hh}(E_{\mathrm c})=3.
\]
The proof is complete.
\end{proof}
\begin{remark}\label{rem:D_2}
   It is easy to see that $D_2$ is a (compact) Kakeya set in $\Hh$ with respect to the definition given in \cite{L22,FPW}.  Indeed, for every
\(\theta\in[0,2\pi)\) and \(s\in[0,1]\),
\[
    \gamma_{\theta,0}(s)
      =(-s\sin\theta,s\cos\theta,0)
\]
and
\(
    (-s\sin\theta)^2+(s\cos\theta)^2=s^2\leq1<4.
\)
Hence
\(
    \Gamma_{\theta,0}(e)
      =\gamma_{\theta,0}([0,1])
      \subset D_2,
\)
so \(D_2\in\mathcal K(\{0\})\).
Moreover, the relative interior
\(
D_2^{\circ}:=\{(x,y,0)\in\Hh:x^2+y^2<4\}
\)
is a smooth two-dimensional Euclidean surface and therefore has Heisenberg
Hausdorff dimension \(3\) by \cite[p.~414]{BDCKMT}. Since
\(D_2^{\circ}\subset D_2\subset\Pi\) and
\(\dim_{\mathcal H}^{\Hh}(\Pi)=3\), it follows that
\[
\dim_{\mathcal H}^{\Hh}(D_2)=3.
\]

\end{remark}

\section*{Funding}
The author is a postdoctoral fellow of the Research Foundation -- Flanders (FWO) under postdoctoral grant no.~1210226N.

\section*{Acknowledgements}
I  would like to thank Marina Iliopoulou, Ioannis Parissis and Tuomas Orponen for valuable discussions and feedback on  this work.  Special thanks are due to Zolt\'an Balogh
for suggesting the definition of curvature-\(\kappa\) Kakeya
sets, which led to the results presented in the first part of 
Section~\ref{sec:further-results}.

\bibliographystyle{plain}

\end{document}